\providecommand{\R}{\mathbb{R}}
\providecommand{\N}{\mathbb{N}}
\providecommand{\ee}{\varepsilon}
\providecommand{\BB}{\mathcal{B}}
\providecommand{\MM}{\mathcal{M}}
\providecommand{\PP}{\mathcal{P}}
\providecommand{\LL}{\mathcal{L}}
\providecommand{\FF}{\mathcal{F}}
\providecommand{\QQ}{\mathcal{Q}}
\providecommand{\lin}{\mathop{\rm span}\nolimits}
\providecommand{\pos}{\mathop{\rm pos}\nolimits}
\newtheorem{Thm}{Theorem}
\newtheorem{Lem}[Thm]{Lemma}
\newtheorem{Prop}[Thm]{Proposition}
\newtheorem{Cor}[Thm]{Corollary}
\theoremstyle{definition}
\newtheorem{Def}[Thm]{Definition}
\theoremstyle{remark}
\title{The Yao-Yao equipartition of a measure}
\author{Joseph Lehec\footnote{
LAMA (UMR CNRS 8050) Universit\'e Paris-Est
}}
\date{September 2008}
\begin{document}
\maketitle

\begin{abstract} 
The Yao-Yao partition theorem states that for any probability measure $\mu$ on $\mathbb{R}^n$ having a density which is continuous and bounded away from $0$,  it is possible to partition $\mathbb{R}^n$ into $2^n$ regions of equal measure for $\mu$ in such a way that every affine hyperplane of $\mathbb{R}^n$ avoids at least one of the regions. We 
give a constructive proof of this result and extend it to slightly more general measures. 
\bigskip

\noindent
Published in Arch. Math. 92 (4) (2009) 366--376.
\end{abstract}

\section{Introduction}
In \cite{yaoyao}, Yao and Yao show that for any probability measure $\mu$ on $\mathbb{R}^n$ having a density which is continuous and bounded away from $0$,  it is possible to partition $\mathbb{R}^n$ into $2^n$ regions of equal measure for $\mu$ in such a way that every affine hyperplane of $\mathbb{R}^n$ avoids at least one of the regions. This theorem was designed for computational geometry purposes but it turned out to be useful 
in other areas of mathematics. For instance, the authors of~\cite{alon} use it to prove 
Ramsey type theorems and in~\cite{moi}, we show a connection with the Blaschke-Santal\'o
inequality. More precisely, using a partition \`a la Yao and Yao, we reduce a general 
functional form of the Blaschke-Santal\'o inequality (due to Fradelizi and Meyer~\cite{fm}) 
to an easy inequality between functions deÞned on $\mathbb{R}_+$. 
The proof of Yao and Yao is by induction on the dimension and uses the Borsuk-Ulam theorem. The purpose of this paper is to show that the Yao-Yao theorem 
can be obtained in a much more concrete way, by applying the real intermediate 
values theorem again and again. Of course this proof is longer, but we believe that 
it gives a better understanding of the structure of the Yao-Yao partition. Also we 
are able to get rid of the technical assumptions ($\mu$ having a continuous density 
bounded away from 0) which are annoying for applications. The article \cite{yaoyao} being 
very sketchy, we intend to give (almost) every detail.

The paper deals with finite dimensional real affine spaces; if $E$ is such a space, $\vec{E}$ denotes the associated vector space. We say that $\PP$ is a \emph{partition} of $E$ if $\cup \PP = E$ and if the interiors of two distinct elements of $\PP$ do not intersect.
For instance, with this definition, the set $\PP=\{(-\infty,a],[a,+\infty)\}$ is a partition of $\R$.
\begin{Def}
\label{5YYP}
If $E=\{x\}$ is an affine space of dimension $0$, we say that $\PP$ is a Yao-Yao partition of $E$ if $\PP=\{x\}$ and we define the center of $\PP$ to be $x$. \\
Let $E$ be an affine space of dimension $n\geq 1$. We say that $\PP$ is a Yao-Yao partition of $E$ if there exists an affine hyperplane $F$ of $E$, a vector $v\in \vec{E} \backslash \vec{F}$ and two Yao-Yao partitions $\PP_1$ and $\PP_{-1}$ of $F$ \emph{having the same center $x$} such that 
\[ \PP= \bigl\{  A + \R_- v {\ \vert \ } A\in \PP_{-1} \bigr\} \cup \bigl\{ A + \R_+ v {\ \vert \ } A\in \PP_1 \Bigr\} ,  \]
and we say that $x$ is the center of $\PP$. 
\end{Def}
%
%When $E$ has dimension $2$, a Yao-Yao partition is just the partition defined by two non-parallel lines. From dimension $3$, the picture becomes more complicated.
%%
%\begin{center}
%\begin{tikzpicture}
%%%bas%%
%\draw  (3,0,2) -- (2.25,-1.5,2.75) -- (-0.75,-1.5,0.75);
%\draw (-0.75,-1.5,0.75) -- (-2.75,-1.5,3.75) -- (-2,0,3);
%\draw (-0.5,-1,0.5) -- (-0.75,-1.5,0.75);
%%%haut%%
%\draw (0,0,0) -- (0.75,1.5,-0.75);
%\draw (0.6,0,0) -- (3,0,0) -- (3.75,1.5,-0.75) -- (-2.25,1.5,-0.75)
%          -- (-3,0,0) -- (0,0,0);
%\draw (0,0,0) -- (1,0,3) -- (1.75,1.5,2.25) --
%                 (-0.25,1.5,-3.75) -- (-0.7,0.6,-3.3);
%\draw (0,0,0) -- (0.75,1.5,-0.75);
%%%F%%
%\draw[thick] (2.5,0,-3) -- (3,0,-3) -- (3,0,3)  -- (-3,0,3)  node [left] {$F$}  -- (-3,0,0);
%%%v%%
%\draw[thick,->] (0,0,0) -- (0.5,1,-0.5) node [left] {$v$};
%\end{tikzpicture}
%\end{center}
%%
If $E$ has dimension $n$, then a Yao-Yao partition of $E$ has $2^n$ elements and we shall see in the following section that every hyperplane of $E$ avoids at least one of the elements of a Yao-Yao partition. Let us state our main theorem. \\
Let $\MM(E)$ be the set of non-negative Borel measures $\mu$ on $E$ which are finite and satisfy $\mu(F)=0$ for any affine hyperplane $F$. 
\begin{Def}
\label{5equipartition}
Let $\mu\in\MM(E)$, a Yao-Yao \emph{equipartition} $\PP$ for $\mu$ is a Yao-Yao partition of $E$ satisfying 
\begin{equation}
\label{5equip}
\forall A\in\PP, \quad  \mu(A) = 2^{-n} \mu(E).
\end{equation}
We say that $x\in E$ is a Yao-Yao center of $\mu$ if $x$ is the center of a Yao-Yao equipartition for $\mu$. 
\end{Def}
\begin{Thm}
\label{5main}
Let $\mu \in \MM(E)$, there exists a Yao-Yao equipartition for $\mu$. Moreover, if $\mu$ has a center of symmetry $x\in E$, then $x$ is a Yao-Yao center for $\mu$.
\end{Thm}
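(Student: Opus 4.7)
The plan is induction on $n = \dim E$, proving both parts of the theorem together. The base case $n = 0$ is immediate. For the inductive step, I would unpack Definition~\ref{5YYP}: a Yao-Yao equipartition of $\mu\in\MM(E)$ amounts to the data of a hyperplane $F\subset E$, a transverse direction $v\in\vec E\setminus\vec F$, and two Yao-Yao equipartitions $\PP_+,\PP_-$ of the half-projected measures
\[
 \mu^\pm(B) \;=\; \mu\bigl(\pi_v^{-1}(B)\cap(F+\R_\pm v)\bigr), \qquad B\subset F,
\]
on $F$ with a common center; here $\pi_v\colon E\to F$ is the projection along $v$. I would first check two routine facts: that $\mu^\pm\in\MM(F)$, because the $\pi_v$-preimage of any affine hyperplane of $F$ is an affine hyperplane of $E$ and hence $\mu$-null; and that for any prescribed direction $\vec F$, the bisection condition $\mu^+(F)=\mu^-(F)=\mu(E)/2$ determines the position of $F$ uniquely, via the scalar IVT applied to the continuous monotone function $t\mapsto\mu(\{\ell\leq t\})$ where $\ker\ell=\vec F$.

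The central difficulty, and what I expect to be the main obstacle, is matching the two centers produced by the induction. With $\vec F$ fixed, the inductive hypothesis furnishes Yao-Yao centers $y_\pm(v)\in F$ of $\mu^\pm(v)$ for each transverse $v$, and I need to find $v$ with $y_+(v)=y_-(v)$. The key symmetry is the flip $v\mapsto -v$: the two closed half-spaces get swapped, the projection itself is unchanged (since $\pi_{-v}=\pi_v$), and so $\mu^+$ and $\mu^-$ trade places; the defect $d(v)=y_+(v)-y_-(v)\in\vec F$ is therefore odd, $d(-v)=-d(v)$. In dimension $n=2$, $d$ is a scalar odd continuous function on the circle of transverse directions, so a single real IVT produces a zero. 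For $n\geq 3$, $d$ takes values in an $(n-1)$-dimensional space, and, matching the introduction's slogan of applying the intermediate value theorem ``again and again'', I would iterate scalar IVTs, killing one coordinate of the defect at a time along a nested sequence of odd-invariant one-parameter families of $v$'s. The subtle point that must be controlled is that the Yao-Yao center is in general not unique, so an equivariant continuous selection of $y_\pm(v)$ has to be made before IVT can even be applied; this is where I expect the real technical work to lie.

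For the symmetric case, assume $\mu$ has center of symmetry $x$ and write $\sigma_x$ for the associated central symmetry. For any prescribed $\vec F$, the hyperplane with this direction passing through $x$ bisects $\mu$ automatically, since $\sigma_x$ swaps the two open half-spaces; and because $F\ni x$, the projection $\pi_v$ commutes with $\sigma_x|_F$, yielding $\sigma_x\mu^-=\mu^+$ on $F$. I would then construct a $\sigma_x$-symmetric equipartition by setting $\PP_+:=\sigma_x\PP_-$; its center is $\sigma_x(y_-)=2x-y_-$, so the common-center condition collapses to $y_-=x$. The identity $\mu^-(-v)=\mu^+(v)=\sigma_x\mu^-(v)$ forces $y_-(-v)=\sigma_x(y_-(v))$, whence $y_-(v)-x$ is odd under $v\mapsto -v$ in $\vec F$, and the same iterated-IVT mechanism as in the existence part produces a $v$ with $y_-(v)=x$, giving a Yao-Yao equipartition centered at $x$.
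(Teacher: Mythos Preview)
Your inductive setup and the reformulation via the projected measures $\mu^\pm$ are correct and match the paper's framework. The oddness argument, however, has a real gap: once $\vec F$ is fixed, the space of transverse directions $\vec E\setminus\vec F$ has \emph{two} connected components, and the involution $v\mapsto -v$ swaps them. Thus $d(-v)=-d(v)$ only relates values of $d$ lying on different components and yields no sign change along any connected arc; the scalar IVT does not apply, even for $n=2$. (Compactifying an arc by adjoining the two limit directions in $\vec F$ does not help: the half-spaces degenerate there and $d$ does not extend continuously.) The same objection applies verbatim to your treatment of the symmetric case.

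The paper replaces oddness by a different mechanism, and in doing so also resolves the selection problem you flag. It first restricts to $\MM^*(E)$ (measures positive on every open set), where it proves the Yao-Yao center relative to a fixed coordinate system is \emph{unique}; this makes $v\mapsto y_\pm(v)$ single-valued and continuous. Normalizing $v_1=1$ and writing $v=e^1+w$ with $w\in\vec F$, the defect $T(w)=y_-(w)-y_+(w)$ is then shown to enjoy: (a) a \emph{triangular} structure --- the first $k$ coordinates of $T(w)$ depend only on the first $k$ coordinates of $w$, a corollary of the uniqueness proposition; and (b) $\vec\ell(T(w))\to+\infty$ whenever $\vec\ell(w)\to+\infty$ for any linear form $\ell$, coming from the degeneration of $\mu^\pm$ as $w$ escapes to infinity. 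Properties (a) and (b) together let one solve $T(w)=0$ by $n-1$ successive scalar IVTs, fixing one coordinate of $w$ at a time. The general $\mu\in\MM(E)$ and the symmetric assertion are then obtained by approximating $\mu$ by $\mu+p^{-1}\gamma$ with $\gamma\in\MM^*(E)$ (chosen symmetric about $x$ in the second case) and passing to the limit via the continuity results. Your proposal is missing both the triangular dependence (which is what actually drives the ``IVT again and again'') and the $\MM^*$-restriction/approximation that disposes of the selection issue.
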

\section{Main properties}
If $A$ is a subset of $\vec{E}$ we denote by $\pos(A)$ the positive hull of $A$, that is to say the smallest convex cone containing $A$. \\
A Yao-Yao partition $\PP$ of an $n$-dimensional space $E$ has $2^n$ elements and for each $A$ in $\PP$ there exists a basis $v_1,\dotsc,v_n$ of $\vec{E}$ such that 
\begin{equation}
\label{5pos} 
A = x + \pos(v_1,\dotsc,v_n) ,
\end{equation}
where $x$ is the center of $\PP$.
Indeed, assume that $\PP$ is defined by $F,v,\PP_1$ and $\PP_{-1}$ (see Definition~\ref{5YYP}). Let $A\in \PP_1$ and assume inductively that there is a basis $v_1,\dotsc,v_{n-1}$ of $\vec{F}$ such that $A = x  + \pos ( v_1 , \dotsc, v_{n-1} )$. Then $A + \R_+ v =  x + \pos( v , v_1, \dotsc , v_{n-1} )$.
\begin{Prop}
\label{5halfspace}
Let $\PP$ be a Yao-Yao partition of $E$ and $x$ be its center. Any affine half-space containing $x$ contains an element of $\PP$.
\end{Prop}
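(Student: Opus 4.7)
The plan is to proceed by induction on $n = \dim E$. The base case $n = 0$ is trivial: $E = \{x\}$ and $\PP = \{\{x\}\}$, so any half-space containing $x$ contains the unique element $\{x\}$.

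For the inductive step, let $\PP$ be specified by the data $F, v, \PP_1, \PP_{-1}$ of Definition~\ref{5YYP}, and let $H$ be an affine half-space of $E$ containing $x$. Writing $H = \{y \in E : \ell(y-x) \geq 0\}$ for a nonzero linear form $\ell$ on $\vec{E}$, the key move is to pick the sign $\varepsilon \in \{-1,+1\}$ so that $\varepsilon\, \ell(v) \geq 0$; then any point of the form $y + t v$ with $y \in F$ satisfying $\ell(y-x)\geq 0$ and $\varepsilon t \geq 0$ will automatically lie in $H$, because $\ell(y+tv-x) = \ell(y-x) + t\,\ell(v)$ is a sum of two nonnegative terms. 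Thus the task reduces to finding $A \in \PP_\varepsilon$ with $A \subset H \cap F$, whereupon $A + \R_\varepsilon v$ is the desired element of $\PP$.

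In the generic case $\ell|_{\vec{F}} \neq 0$, the set $H \cap F$ is a genuine affine half-space of $F$ containing $x$, and the inductive hypothesis applied to the Yao-Yao partition $\PP_\varepsilon$ of $F$ produces the required $A$. I expect the main (minor) obstacle to be the degenerate case $\ell|_{\vec{F}} = 0$, where $H$ already contains the whole hyperplane $F$ and $H \cap F = F$ is not a proper half-space of $F$, so the inductive hypothesis does not formally apply. This case is handled directly: any $A \in \PP_\varepsilon$ lies in $F$, so $\ell(y-x) = 0$ for $y \in A$, and the same computation on $A + \R_\varepsilon v$ still yields $A + \R_\varepsilon v \subset H$. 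Once this sign-bookkeeping and the boundary case are dispatched, the induction closes cleanly.
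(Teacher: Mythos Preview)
Your argument is correct and follows essentially the same induction as the paper: the paper applies the inductive hypothesis to both $\PP_1$ and $\PP_{-1}$ and then selects according to the sign of $\vec\ell(v)$, while you pick the sign $\varepsilon$ first and apply induction only to $\PP_\varepsilon$, which is merely a reordering. One small slip worth fixing: writing $H = \{y : \ell(y-x)\geq 0\}$ forces the bounding hyperplane to pass through $x$, which is not the general case; either remark that this is without loss of generality (any half-space containing $x$ contains one whose boundary passes through $x$), or write $H = \{y : \ell(y-x)\geq -c\}$ with $c\geq 0$, after which your computation $\ell(y-x)+t\,\ell(v)\geq 0\geq -c$ goes through unchanged.
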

\begin{proof}
When $E$ has dimension $0$, the result is obvious. Let $E$ have dimension $n\geq 1$ and assume that the proposition holds for any affine space of dimension $n-1$. Let $\ell$ be an affine form on $E$ such that $\ell(x)\geq 0$, and let $H = \{y\in E \,|\, \ell(y)\geq 0\}$. We use the notations of Definition~\ref{5YYP}. By the induction assumption, there exists $A_+ \in \PP_1$ and $A_-\in\PP_{-1}$ such that
\[ \forall y\in A_+,\; \ell(y) \geq 0 \quad
\text{and} \quad \forall y\in A_-,\; \ell(y) \geq 0. \] 
Let $\vec{\ell}$ be the linear form on $\vec{E}$ associated to $\ell$, if $\vec{\ell}(v) \geq 0$ then $\ell(x+tv)\geq 0$ for all $x\in A_+$ and $t\in\R_+$, thus $A_+ + \R_+ v \subset H$. Similarly if $\vec{\ell}(v) \leq 0$ then $A_- + \R_- v \subset H$. In both cases $H$ contains an element of $\PP$.
\end{proof}
\begin{Prop}
\label{5convex}
Let $\mu\in\MM(E)$ and let $K$ be a convex subset of $E$ satisfying $\mu( E\backslash K) < 2^{-n} \mu(E)$. Then any Yao-Yao center $x$ of $\mu$ is contained in $K$.
\end{Prop}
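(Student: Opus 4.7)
My plan is to argue by contradiction: I assume a Yao-Yao equipartition $\PP$ of $\mu$ has center $x\notin K$ and derive a measure contradiction via Proposition~\ref{5halfspace}.

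First I would rule out a degeneracy: $K$ cannot be contained in any affine hyperplane $F$, for then $\mu(K)\leq\mu(F)=0$, hence $\mu(E\setminus K)=\mu(E)$, and the strict inequality $\mu(E\setminus K)<2^{-n}\mu(E)\leq\mu(E)$ would force $\mu(E)=0$, giving the absurd $\mu(E\setminus K)<0$. In particular $K$ has nonempty interior in $E$.

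Next I would separate. Since $x\notin K$ and both are convex in the finite-dimensional affine space $E$, standard convex separation (Hahn--Banach in $\R^n$) produces a nonzero affine form $\ell$ on $E$ and a constant $c\in\R$ with $\ell(x)\leq c$ and $\ell(y)\geq c$ for every $y\in K$. I set $H=\{y\in E\,:\,\ell(y)\leq c\}$, a closed affine half-space containing $x$, and apply Proposition~\ref{5halfspace} to obtain $A\in\PP$ with $A\subset H$.

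To finish, I split $A$ along the boundary hyperplane of $H$. As $K\subset\{\ell\geq c\}$, the piece $A\cap\{\ell<c\}$ is disjoint from $K$, hence contained in $E\setminus K$; the other piece $A\cap\{\ell=c\}$ lies in the affine hyperplane $\{\ell=c\}$, which has $\mu$-measure zero since $\mu\in\MM(E)$. Combining with the equipartition identity $\mu(A)=2^{-n}\mu(E)$ yields
\[
2^{-n}\mu(E)=\mu(A)=\mu(A\cap\{\ell<c\})\leq \mu(E\setminus K)<2^{-n}\mu(E),
\]
the required contradiction. The only real subtlety is to ensure $\ell\neq 0$ so that $\{\ell=c\}$ is a genuine affine hyperplane of $\mu$-measure zero; this is exactly what the preliminary step secures, for otherwise $\{x\}$ and $K$ would both lie in a hyperplane and the separation could be trivial.
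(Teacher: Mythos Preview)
Your proof is correct and follows essentially the same route as the paper: contradiction, Hahn--Banach separation of $x$ from $K$, then Proposition~\ref{5halfspace} to trap an element $A$ of the equipartition in the separating half-space. The paper simply asserts a half-space containing $x$ and \emph{disjoint} from $K$, whereas you more carefully allow the weak separation $\ell(x)\le c\le \ell|_K$ and then discard the boundary slice $A\cap\{\ell=c\}$ using $\mu\in\MM(E)$; this is a legitimate refinement of the same argument, and your preliminary step (ruling out $K$ contained in a hyperplane) is harmless though not strictly needed, since finite-dimensional separation of two disjoint nonempty convex sets already yields a nonzero $\ell$.
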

\begin{proof}
Assume on the contrary that there is a center $x$ of $\mu$ outside $K$, and let $\PP$ be an equipartition with center $x$. By Hahn-Banach there is a half-space containing $x$ and disjoint from $K$. By Proposition~\ref{5halfspace}, this half-space contains an element $A$ of $\PP$. So on the one hand $\mu(A) = 2^{-n} \mu(E)$ and on the other hand $A\subset E\backslash K$, thus we get a contradiction.
\end{proof}
\section{Center with respect to a basis}
%.
Let $E$ be an affine space of dimension $n$ and $\LL=(\ell_1,\dotsc,\ell_n)$ be a family of affine forms on $E$ such that the map
\begin{equation*}
x \in E \mapsto \bigl( \ell_1(x),\dotsc,\ell_n(x) \bigr) \in \R^n
\end{equation*}
is one to one. We say that $\LL$ is a \emph{system of coordinates}. If $x\in E$ and $v \in \vec{E}$ we write $x_i$ and $v_i$ for $\ell_i(x)$ and $\vec{\ell}_i(v)$, respectively. We also let $e^1,\dotsc,e^n$ be the basis of $\vec{E}$ satisfying $e^i_j=\delta_{ij}$. 
Let us introduce a restricted notion of Yao-Yao partition.
\begin{Def}
\label{5YYP2}
A Yao-Yao partition $\PP$ of $E$ given by $F,v,x,\PP_1$ and $\PP_{-1}$ is \emph{adapted to $\LL$} if $F=\{ y\in E \, | \, y_1 = x_1 \}$ and if $\PP_1$ and $\PP_{-1}$ are adapted to $({\ell_2}_{|F},\dotsc, {\ell_n}_{|F})$ (which is a system of coordinates on $F$).
\end{Def}
In the sequel, the vector $v$ is called the \emph{axis} of $\PP$, and it is said to be \emph{normalized} when $v_1=1$. Since $\PP$ has $2^n$ elements, there is a one to one map between $\PP$ and the discrete cube $\{-1,1\}^n$. Let us construct such a map.
\begin{Def}
Let $\PP$ be a Yao-Yao partition of $E$ adapted to $\LL$, defined by $x,\PP_1,\PP_{-1}$ and $v$ (with $v_1=1$). Let $\PP(\emptyset) = E$ and let $(\ee_1,\dotsc,\ee_k)$ be a sequence of $\pm 1$ of size $k\in\{1,\dotsc ,n\}$. Recall that $\PP_{\ee_1}$ is a partition of $F$, hence the notation $\PP_{\ee_1}( \cdot)$ is relative to $F$, for instance $\PP_{\ee_1}(\emptyset) = F$. We assume inductively that we have defined $\PP_{\ee_1} (\ee_2,\dotsc,\ee_k)$ and we let
\[\PP(\ee_1,\dotsc,\ee_k) = \PP_{\ee_1}( \ee_2,\dotsc,\ee_k ) + \R_+ (\ee_1 v).\]
\end{Def}
An easy induction shows that 
\begin{equation}
 \label{5PP}
\PP= \bigl\{ \PP( \ee ) {\ \vert \ } \ee\in \{-1,1\}^n  \bigr\} .
\end{equation}
%Here is a picture in dimension $2$.
%\begin{center}
%\begin{tikzpicture}
%\draw (-1.4,1) node {$\PP(1,-1)$};
%\draw (-1.9,-1) node {$\PP(-1,-1)$};
%\draw (1.9,1) node {$\PP(1,1)$};
%\draw (1.4,-1) node {$\PP(-1,1)$};
%\draw[thick] (-2,0) -- (2,0);
%\draw[thick] (-0.5,-1.5) -- (0.5,1.5);
%\draw[->] (3,0) -- (3.5,0) node [below] {$e^2$};
%\draw[->] (3,0) -- (3,0.5) node [right] {$e^1$};
%\end{tikzpicture}
%\end{center}
We now give some basic properties of the sets $\PP(\cdot)$.
It is easy to prove by induction that if $\PP$ is a Yao-Yao partition of $E$ adapted to the basis $\LL$, then for all $\pm 1$ sequence $\ee$ of size $k$, there exists a sequence of vectors $v^{1},\dotsc,v^{k}$ satisfying $v^{i}_j=0$ and $v^{i}_i= 1$ for all $j < i \leq k$ (we call \emph{sub-diagonal} such a sequence hereafter) such that
\begin{equation}
 \label{5repres}
\PP (\ee) = x + \pos ( \ee_1 v^1 , \dotsc , \ee_k v^k ) + \lin ( e^{k+1},\dotsc , e^n ) ,
\end{equation}
where $x$ is the center of $\PP$. Besides, the first vector $v^1$ is equal to the axis of $\PP$ (in particular, it does not depend on $\ee$). Let $x,y\in E$ and $v^1,\dotsc,v^n$ and $w^1,\dotsc,w^n$ be two sub-diagonal sequences of $\vec{E}$. Observe that if
\[ x + \pos (v^1,\dotsc,v^n)= y + \pos (w^1,\dotsc,w^n)  \]
then $x=y$ and $v^i=w^i$ for all $i=1,\dotsc,n$.
Let $\PP$ and $\QQ$ be Yao-Yao partitions of $E$ satisfying $\PP(\ee) = \QQ(\ee)$ for some $\pm 1$ sequence $\ee$ of length $k\leq n$. Let $P$ be the projection of $E$ with range $\lin(e^1,\dots,e^k)$ and kernel $\lin(e^{k+1},\dotsc,e^n)$. Using \eqref{5repres}, the equality $P\bigl( \PP(\ee) \bigr)=  P\bigl( \QQ(\ee) \bigr)$ and the observation above, we get
\begin{equation}
\label{5center-axis}
\begin{split}
(x_1, \dotsc, x_ k) & =(y_1,\dotsc, y_k) \quad \text{and} \\
(v_1,\dotsc, v_k) & =(w_1,\dotsc, w_k),
\end{split}
\end{equation}
where $x$ and $y$ are the centers of $\PP$ and $\QQ$, respectively; and $v$ and $w$ are their normalized ($v_1=w_1=1$) axes.

If $S$ is a set of affine functions on $E$ (possibly empty), we let $\BB_E(S)$ be the smallest $\sigma$-algebra of subsets of $E$ making all elements of $S$ measurable. When a set $A$ belongs to $\BB_E( S)$ for some $S$, we say that $A$ depends only on $S$.
Another simple consequence of \eqref{5repres} is that for any $\ee$ of length $k$ the set $\PP(\ee)$ depends only on $(\ell_1,\dotsc,\ell_k)$. \\
Let $\alpha\in\R$ and $F=\{\ell_1=\alpha\}$. For any $u\in\vec{E}\backslash \vec{F}$ we let $\pi_{F,u}: F + \R_+ u \to F$ be the projection which, for any $x\in F$ and $t\in \R_+$, maps $x+ t u$ to $x$. Let $A\subset F$, we let
\begin{equation}
\label{5proj} 
A_u = ( \pi_{F,u} )^{-1} (A) = \{ x\in E \, | \, x_1\geq \alpha \; \text{and} \; x -(x_1-\alpha) \frac{u}{u_1} \in A \} .
\end{equation}
Let $\PP$ be a partition of $E$. Let $k<n$ and $\ee_1,\dotsc,\ee_k$ be a $\pm 1$ sequence. There exists an affine form $\ell\in \lin(1,\ell_1,\dotsc,\ell_k)$ such that
\begin{subequations}
\label{5pol}
\begin{align}
\label{5pol1}
\PP(\ee_1,\dotsc ,\ee_{k}, 1 )  &
  = \PP(\ee_1,\dotsc,\ee_k) \cap \{ \ell_{k+1} \geq \ell \} \\
\label{5pol2}
\PP(\ee_1,\dotsc ,\ee_{k}, -1 ) &
= \PP(\ee_1,\dotsc,\ee_k) \cap \{ \ell_{k+1} \leq \ell \} .
\end{align} 
\end{subequations}
Let us prove \eqref{5pol} by induction on $k$. When $k=0$ we have $\PP(1) = F + \R_+ v = \{ \ell_1 \geq x_1\}$ and $\PP(-1)=\{\ell_1\leq x_1\}$. If $k\geq 1$, let $v$ be the normalized ($v_1=1$) axis of $\PP$ and assume inductively that 
\[ \PP_{\ee_1}(\ee_2,\dotsc,\ee_k1) = 
\PP_{\ee_1}(\ee_2,\dotsc,\ee_k) \cap \{  \ell_{k+1} \geq \ell \} \] 
for some $\ell \in \lin( 1,\ell_2 , \dotsc ,\ell_k)$. Let us assume that $\ee_1=1$, then 
\begin{align*}
\PP( 1 , \ee_2 , \dotsc , \ee_k , 1) & = 
 ( \pi_{F,v} )^{-1}
  \bigl(  \PP_1(\ee_2,\dotsc,\ee_k) \cap  \{ \ell_{k+1} \geq  \ell \}  \bigr) \\
 & =   \PP(1,\ee_2,\dotsc,\ee_k) \cap A_v ,
\end{align*}
where $A= F \cap \{ \ell_{k+1} \geq  \ell \}$. Using \eqref{5proj}, we obtain
\[ A_v = \{ x_1\geq \alpha \} \cap \{ \ell_{k+1} \geq \ell' \} \]
where $\ell' = \ell + (\ell_1 - \alpha) ( v_{k+1} - \vec{\ell}(v))$. Since additionnally $\{x_1\geq \alpha\}$ contains $\PP(1,\ee_2,\dotsc,\ee_k)$, we get \eqref{5pol1} when $\ee_1=1$. The proof for $\ee_1=-1$ and the proof of \eqref{5pol2} are similar. \\
From \eqref{5pol}, we get in particular
\[ \PP(\ee_1,\dotsc,\ee_k) = \PP(\ee_1,\dotsc,\ee_k,1) \cup \PP ( \ee_1,\dotsc,\ee_k,-1). \]
Applying this equality again and again, we obtain, for any $\ee$ of size $k$
\[ \PP(\ee) = \cup \bigl\{ \PP(\ee,\tau) {\ \vert \ } \tau \in \{-1,1\}^{n-k} \bigr\} \]
where $(\ee,\tau)$ is the sequence obtained by concatenation of $\ee$ and $\tau$. Therefore, if $\PP$ is an equipartition for $\mu$ then
\begin{equation}
\label{5equipa}
 \mu \bigl( \PP(\ee_1,\dotsc,\ee_k) \bigr) = 2^{- k } \mu (E) .
\end{equation}
\section{Uniqueness}
In this section we prove that, under reasonable assumptions, the Yao-Yao center of a measure, with respect to a given basis, is unique. In the sequel, the space $E$ is equipped with a system of coordinates $\ell_1,\dotsc ,\ell_n$ and all Yao-Yao partitions are adapted to this system.
\begin{Lem}
\label{5Av}
Let $\alpha\in \R$ and $F=\{ z \in E \, | \, z_1=\alpha \}$. 
\begin{enumerate}
\item[(i)] Let $A\subset F$ depend only on $\ell_2,\dotsc,\ell_k$, let $v,w\in\vec{E}$ satisfying $v_1=w_1=1$ and $(v_2,\dotsc, v_k)=(w_2,\dotsc, w_k)$, then $ A_v = A_w $ .
\item[(ii)] Let $\ell$ be an affine form on $E$, non-constant on $F$ and let $A = F\cap \{ \ell \geq 0\}$. Let $v,w$ satisfy $v_1=w_1=1$ and $\vec{\ell} (v) > \vec{\ell}(w)$, then $A_v \subsetneq A_w$.
\item[(iii)] Again let $A = F\cap \{ \ell \geq 0\}$, and let $(v^p)$ be a sequence satisfying $v^p_1 = 1$ for all $p$ and $\vec{\ell}(v^p)\uparrow + \infty$. Then for any $\mu \in \MM(E)$ we have 
 \[ \lim_{p\rightarrow +\infty} \mu ( A_{v^p} ) = 0  . \]
\end{enumerate}
\end{Lem}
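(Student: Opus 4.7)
The plan is to start from the explicit description of $A_u$ in \eqref{5proj}: for $x \in E$ with $x_1 \geq \alpha$, the point $y := x - (x_1-\alpha) u/u_1$ lies in $F$ (since $\ell_1(y) = \alpha$), and $x \in A_u$ iff $y \in A$. With $u_1 = 1$ this becomes $y = x - (x_1-\alpha)u$, and $\ell_i(y) = x_i - (x_1-\alpha)u_i$ for each $i \geq 2$. From here all three parts drop out by reading off what $y \in A$ means.

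For (i), the event $y \in A$ depends only on $\ell_2(y),\dotsc,\ell_k(y)$, i.e.\ on the quantities $x_i - (x_1-\alpha) u_i$ for $2 \leq i \leq k$. These coincide for $u = v$ and $u = w$ by the hypothesis $v_i = w_i$ for $i = 2,\dotsc,k$, so $A_v = A_w$.

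For (ii), with $A = F \cap \{\ell \geq 0\}$, the condition $y \in A$ reads $\ell(x) - (x_1-\alpha)\vec{\ell}(u) \geq 0$, i.e.\ $\ell(x) \geq (x_1-\alpha) \vec{\ell}(u)$. Since $x_1 - \alpha \geq 0$ and $\vec{\ell}(v) > \vec{\ell}(w)$, the constraint for $u = v$ is stronger than for $u = w$, giving $A_v \subseteq A_w$. For strict inclusion I would exhibit a witness: because $\ell$ is non-constant on $F$, the forms $\ell_1$ and $\ell$ are affinely independent, so I can pick $x \in E$ with $x_1 = \alpha + 1$ and $\ell(x) = \vec{\ell}(w)$; such an $x$ lies in $A_w \setminus A_v$. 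This is the only step requiring a little care.

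For (iii), part (ii) shows the $A_{v^p}$ form a decreasing sequence of measurable sets. Their intersection consists of those $x$ with $x_1 \geq \alpha$ such that $\ell(x) \geq (x_1-\alpha)\vec{\ell}(v^p)$ for every $p$; if $x_1 > \alpha$ the right-hand side tends to $+\infty$, which is impossible, so $x_1 = \alpha$ and $\ell(x) \geq 0$, i.e.\ $x \in A \subset F$. Since $\mu \in \MM(E)$ gives $\mu(F) = 0$, continuity of the finite measure $\mu$ from above yields $\mu(A_{v^p}) \to \mu(A) = 0$.
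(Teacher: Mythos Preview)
Your proof is correct and follows essentially the same route as the paper: all three parts are read off from the explicit formula \eqref{5proj}, with (ii) yielding the description $A_v = \{x_1 \geq \alpha\} \cap \{\ell(x) \geq (x_1-\alpha)\vec{\ell}(v)\}$ and a witness at level $x_1 = \alpha + 1$ for strictness, and (iii) using monotonicity from (ii) together with $\bigcap_p A_{v^p} \subset F$ and $\mu(F) = 0$. The only cosmetic difference is that your witness in (ii) sits on the boundary ($\ell(x) = \vec{\ell}(w)$) whereas the paper takes $\vec{\ell}(w) < \ell(x) < \vec{\ell}(v)$; both work.
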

\begin{proof}
Point \textit{(i)} follows easily from \eqref{5proj}. For \textit{(ii)}, observe that when $A=F\cap\{\ell\geq 0\}$, equation~\eqref{5proj} becomes
\begin{equation}
 \label{5Avbis}
A_v  =  \{ x\in E  \, | \, x_1\geq \alpha \; \text{and} \; \ell(x) \geq (x_1-\alpha) \vec{\ell}(v) \} ,
\end{equation}
and similarly for $A_w$. The inclusion $A_v\subset A_w$ follows immediately. Besides, since $\ell$ is non-constant on $F$, we can find $x$ satisfying $x_1=1+\alpha$ and
\[ \vec{\ell}(w) < \ell(x) < \vec{\ell}(v) , \]
so inclusion is strict. By \textit{(ii)} the sequence $(A_{v^p})$ is decreasing, and clearly by \eqref{5Avbis} the intersection $\cap A_{v^p}$ is included in $F$, hence \textit{(iii)}.
\end{proof}
The key step is the following
\begin{Lem}
\label{5inclusion}
Let $\PP$ and $\QQ$ satisfy 
\begin{align}
 \label{5hyp1}
& \forall \ee_1,\dotsc,\ee_k, \quad \PP(\ee_1,\dotsc,\ee_k)=\QQ(\ee_1,\dotsc,\ee_k) \\
\label{5hyp2}
& \exists \ee'_1,\dotsc,\ee'_{k+1}, \quad \PP(\ee'_1,\dotsc,\ee'_{k+1})
              \neq \QQ(\ee'_1,\dotsc,\ee'_{k+1})
\end{align}
for some $k<n$. If $x_{k+1}\geq y_{k+1}$, then there exists $\delta_1,\dotsc,\delta_k$ such that $\PP(\delta_1,\dotsc,\delta_k ,1)$ is strictly included in $\QQ(\delta_1,\dotsc,\delta_k ,1)$.
\end{Lem}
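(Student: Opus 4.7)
I would induct on $k$. For the base case $k = 0$, hypothesis \eqref{5hyp2} produces some $\ee'_1$ with $\PP(\ee'_1) \neq \QQ(\ee'_1)$; since $\PP(1) = \{\ell_1 \geq x_1\}$ and $\QQ(1) = \{\ell_1 \geq y_1\}$, this forces $x_1 \neq y_1$, and combined with $x_1 \geq y_1$ gives $x_1 > y_1$, whence $\PP(1) \subsetneq \QQ(1)$, so the empty sequence works.

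For the inductive step $k \geq 1$, the depth-$1$ agreement yields $x_1 = y_1$ and a common first hyperplane $F$, and by \eqref{5center-axis} the normalized axes $v, w$ of $\PP, \QQ$ agree on coordinates $1, \dotsc, k$. The first key step is to unfold \eqref{5pol} once and simplify, using this axis agreement, to obtain the identity
\[
\ell_\PP^{(\epsilon, \delta')}(z) - \ell_\QQ^{(\epsilon, \delta')}(z) \;=\; g_F^{\epsilon}(p) + (z_1 - x_1)(v_{k+1} - w_{k+1})
\]
for any $\epsilon \in \{\pm 1\}$, $\delta' \in \{\pm 1\}^{k-1}$ and $z \in \PP(\epsilon, \delta')$, where $p := z - (z_1 - x_1)v \in \PP_\epsilon(\delta')$ and $g_F^\epsilon := \ell_{\PP_\epsilon}^{(\delta')} - \ell_{\QQ_\epsilon}^{(\delta')}$ is an affine form on $F$. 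The cancellation responsible for this clean form is that the linear parts of $\ell_{\PP_\epsilon}^{(\delta')}$ and $\ell_{\QQ_\epsilon}^{(\delta')}$ involve only $\ell_2, \dotsc, \ell_k$, on which $v - w$ vanishes.

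Next, I choose $\delta_1 \in \{\pm 1\}$ so that $\delta_1(v_{k+1} - w_{k+1}) \geq 0$, with the additional rule that in the degenerate case $v_{k+1} = w_{k+1}$ I pick $\delta_1$ such that $\PP_{\delta_1}$ and $\QQ_{\delta_1}$ differ at depth $k$ in $F$ (possible by \eqref{5hyp2}). Since $\delta_1(z_1 - x_1) \geq 0$ on $\PP(\delta_1, \delta')$, this ensures $(z_1 - x_1)(v_{k+1} - w_{k+1}) \geq 0$ on that piece. I then split into two subcases. In case (a), $\PP_{\delta_1}$ and $\QQ_{\delta_1}$ differ at depth $k$ in $F$: I apply the inductive hypothesis to this pair in $F$ (the relevant ``center coordinate'' in $F$'s system is $x_{k+1}$ versus $y_{k+1}$, so the ordering assumption is inherited) to obtain $\delta_2, \dotsc, \delta_k$ making $g_F^{\delta_1} \geq 0$ strict somewhere on $\PP_{\delta_1}(\delta')$, and the identity above then yields $\PP(\delta_1, \delta', 1) \subsetneq \QQ(\delta_1, \delta', 1)$. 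In case (b), $\PP_{\delta_1}$ and $\QQ_{\delta_1}$ agree at depth $k$ in $F$, so $g_F^{\delta_1} \equiv 0$ and the sign rule forces $v_{k+1} \neq w_{k+1}$ strictly; setting $A := \PP_{\delta_1}(\delta', 1) = \QQ_{\delta_1}(\delta', 1)$ for an arbitrary $\delta'$ of length $k - 1$, I verify directly that $A + \delta_1 \R_+ v \subsetneq A + \delta_1 \R_+ w$, using that $A$ is invariant under positive $e^{k+1}$-shifts (from the ``$+1$''-cut restriction) and under arbitrary shifts in $e^{k+2}, \dotsc, e^n$ (the free linear part visible in \eqref{5repres}), with strictness witnessed for instance by $z = x + \delta_1 s w$ with $s > 0$.

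The main obstacle I anticipate is the axis mismatch $v \neq w$: a strict inclusion produced by the inductive hypothesis inside $F$ does not naively lift back to $E$ because $\PP$ and $\QQ$ lift along different axes. The sign rule for $\delta_1$ aligned with $v_{k+1} - w_{k+1}$, coupled with the ``same base'' fallback of case (b), is exactly what reconciles this and converts the sub-partition input into the desired $E$-level strict inclusion.
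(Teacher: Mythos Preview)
Your proof is correct and follows essentially the same approach as the paper: induction on $k$, with the inductive step handled by comparing the normalized axes $v$ and $w$ on their $(k{+}1)$-st coordinate and applying the induction hypothesis to the sub-partitions $\PP_{\delta_1},\QQ_{\delta_1}$ of $F$. The paper organizes the case split as three cases on the sign of $v_{k+1}-w_{k+1}$ and invokes Lemma~\ref{5Av}(i)--(ii) directly, whereas you first derive the explicit identity for $\ell_\PP-\ell_\QQ$ (which is precisely what Lemma~\ref{5Av} encodes) and then split on whether the chosen sub-partitions differ at depth $k$; these are reorganizations of the same argument.
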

\begin{proof}
The proof is by induction on $k$.
If $k=0$ then \eqref{5hyp2} becomes
\[ \PP(1) = \{\ell_1\geq x_1\} \neq  \{\ell_1\geq y_1\} = \QQ(1). \]
So $x_1\neq y_1$, thus $x_1 > y_1$, hence $\PP(1) \subsetneq \QQ(1)$.  \\
Assume that $k\geq 1$. Let $v$ and $w$ be the normalized ($v_1=w_1=1$) axes of $\PP$ and $\QQ$, respectively. Recall that \eqref{5hyp1} implies \eqref{5center-axis}: $(x_1,\dotsc, x_k)= (y_1,\dotsc ,y_k)$ and $(v_1,\dotsc, v_k)= (w_1,\dotsc, w_k)$. Also, intersecting \eqref{5hyp1} with $\{\ell_1=x_1\}$, we get
\begin{equation}
 \label{5hyp1bis}
\forall \ee_1,\dots\ee_k , \quad \PP_{\ee_1} (\ee_2,\dotsc,\ee_k) =
         \QQ_{\ee_1} (\ee_2,\dotsc,\ee_k) .
\end{equation} 
There are three cases.
If $v_{k+1}=w_{k+1}$, since for all $\ee_1,\dotsc,\ee_{k+1}$, the set $A = \PP_{\ee_1} (\ee_2,\dotsc,\ee_{k+1})$ depends only on $\ell_2,\dotsc, \ell_{k+1}$, Lemma~\ref{5Av}~\textit{(i)} implies that
\begin{equation}
 \label{5etape1}
\begin{split}
\PP (\ee_1,\dotsc,\ee_{k+1}) & =
\PP_{\ee_1} (\ee_2,\dotsc,\ee_{k+1}) + \R_+ (\ee_1v)  \\ 
 & = \PP_{\ee_1} (\ee_2,\dotsc,\ee_{k+1}) + \R_+ (\ee_1w).
\end{split}
\end{equation}
Also $\QQ(\ee_1,\dotsc,\ee_{k+1}) = \QQ_{\ee_1} (\ee_2,\dotsc,\ee_{k+1}) + \R_+ (\ee_1w)$.  
Therefore there must exist $(\delta_1,\ee'_2 ,\dotsc ,\ee'_{k+1})$ such that 
\[ \PP_{\delta_1}(\ee'_2,\dotsc,\ee'_{k+1})\neq \QQ_{\delta_1}(\ee'_2,\dotsc,\ee'_{k+1}) , \]
otherwise \eqref{5hyp2} would fail. Recalling \eqref{5hyp1bis}, we remark that we can apply the induction assumption to $\PP_{\delta_1}$ and $\QQ_{\delta_1}$: there exists $\delta_2,\dotsc,\delta_k$ such that $\PP_{\delta_1} (\delta_2,\dotsc,\delta_k 1)$ is stricly included in $\QQ_{\delta_1} (\delta_2,\dotsc,\delta_k 1)$. Then \eqref{5etape1} shows that  $\PP(\delta_1,\dotsc,\delta_k 1)$ is strictly included in $\QQ(\delta_1,\dotsc,\delta_k 1)$, which concludes the proof in this case. \\
If $v_{k+1}>w_{k+1}$, then by \eqref{5pol} and Lemma~\ref{5Av}~\textit{(ii)} we obtain
\begin{equation}
 \label{5etape2}
\PP_{1} (\ee_2,\dotsc,\ee_k 1) + \R_+ v  \subsetneq
       \PP_{1} (\ee_2,\dotsc,\ee_k 1) + \R_+ w ,
\end{equation}
for all $\ee_2,\dotsc,\ee_k$.
Therefore, it is enough to prove that there exists $\delta_2,\dotsc,\delta_k$ such that 
\[ \PP_{1}(\delta_2,\dotsc,\delta_k 1)\subset \QQ_{1}(\delta_2,\dotsc,\delta_k 1) . \]
This holds by the induction assumption applied to $\PP_1$ and $\QQ_1$: either we have equality for all $\delta_2,\dotsc,\delta_k $, or there exists $\delta_2,\dotsc,\delta_k $ such that the inclusion is strict. \\
The case $v_{k+1}<w_{k+1}$ is similar, we just need to deal with $\PP_{-1}$ and $\QQ_{-1}$ instead of $\PP_1$ and $\QQ_1$.
\end{proof}
We now let $\MM^*(E)$ be the set of measures $\mu\in\MM(E)$ which satisfy $\mu(U) >0$ for every open set $U$. Here is the main result of this section.
\begin{Prop}
\label{5unique}
Let $\mu\in\MM^* (E)$, then $\mu$ has at most one center with respect to the system $\LL$. Moreover the $k$ first coordinates of the center of $\mu$ depend only on the restriction of $\mu$ to $\BB_E(\ell_1,\dotsc,\ell_k)$.
\end{Prop}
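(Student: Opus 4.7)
I would prove by induction on $k \in \{0,\dotsc,n\}$ the following joint statement $(S_k)$, from which both assertions of the proposition fall out at once: \emph{if $\mu, \mu' \in \MM^*(E)$ admit Yao-Yao equipartitions $\PP, \PP'$ and coincide on $\BB_E(\ell_1,\dotsc,\ell_k)$, then $\PP(\ee) = \PP'(\ee)$ for every $\pm 1$ sequence $\ee$ of length at most $k$.} Specializing $(S_n)$ to $\mu = \mu'$ shows that two equipartitions of the same measure share every cell, and \eqref{5center-axis} applied at $k=n$ then forces their centers to agree; this yields uniqueness. Applied to two different measures agreeing only on $\BB_E(\ell_1,\dotsc,\ell_k)$, the same identity \eqref{5center-axis} delivers the dependence clause on the first $k$ coordinates.

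The base case $(S_0)$ is trivial since $\PP(\emptyset) = E = \PP'(\emptyset)$. For the inductive step, note that the hypothesis of $(S_{k+1})$ implies that of $(S_k)$, so by induction the level-$k$ cells of $\PP$ and $\PP'$ already coincide and only level $k+1$ is at stake. Argue by contradiction: suppose $\PP(\ee') \neq \PP'(\ee')$ for some $\ee'$ of length $k+1$. Then \eqref{5center-axis} forces the centers $x$ of $\PP$ and $y$ of $\PP'$ to satisfy $(x_1,\dotsc,x_k) = (y_1,\dotsc,y_k)$, and swapping $\PP$ and $\PP'$ if necessary I may assume $x_{k+1} \geq y_{k+1}$. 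Lemma~\ref{5inclusion} then furnishes $\delta = (\delta_1,\dotsc,\delta_k)$ with $\PP(\delta,1) \subsetneq \PP'(\delta,1)$.

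The contradiction is measure-theoretic. Both cells lie in $\BB_E(\ell_1,\dotsc,\ell_{k+1})$ by the observation following \eqref{5repres}, so the hypothesis of $(S_{k+1})$ together with the equipartition identity \eqref{5equipa} forces $\mu(\PP(\delta,1)) = \mu(\PP'(\delta,1)) = 2^{-(k+1)}\mu(E)$, whence $\mu(\PP'(\delta,1) \setminus \PP(\delta,1)) = 0$. On the other hand, \eqref{5repres} presents each cell as a closed convex subset of $E$ with non-empty interior; since $\PP'(\delta,1)$ is therefore the closure of its own interior, and since the closed set $\PP(\delta,1)$ is a proper subset, the interior of $\PP'(\delta,1)$ cannot be contained in $\PP(\delta,1)$, and so $\PP'(\delta,1) \setminus \PP(\delta,1)$ contains a non-empty open subset of $E$. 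The hypothesis $\mu \in \MM^*(E)$ then makes this difference have strictly positive measure, the desired contradiction.

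The main obstacle is conceptual rather than technical: one has to package the induction in the joint form $(S_k)$ so that uniqueness (for a fixed $\mu$) and the dependence clause (for two measures agreeing on a sub-$\sigma$-algebra) both emerge from a single argument comparing two partitions. Once that packaging is in place, the argument is the mechanical combination of Lemma~\ref{5inclusion}, the identities \eqref{5center-axis} and \eqref{5equipa}, and the $\MM^*$ hypothesis, relying only on the elementary topological fact that a closed convex set with non-empty interior coincides with the closure of that interior.
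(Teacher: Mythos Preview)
Your argument is correct and is essentially the paper's own proof: both compare two equipartitions, locate the first level $k$ at which the cells disagree, invoke Lemma~\ref{5inclusion} to produce a strict inclusion at level $k{+}1$, and derive a contradiction from \eqref{5equipa}, the $\BB_E(\ell_1,\dotsc,\ell_{k+1})$-measurability of the cells, and the $\MM^*$ hypothesis. The only cosmetic difference is that you package the passage to level $k$ as an explicit induction $(S_k)$, whereas the paper simply picks the minimal $k$ satisfying \eqref{5hyp1}--\eqref{5hyp2} and argues directly that $k\geq l$; your justification that the set difference has non-empty interior is also a bit more detailed than the paper's one-line ``since these sets are polytopes''.
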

\begin{proof}
Let $\mu,\nu \in\MM^* (E)$ satisfy $\nu(A) = \mu (A)$ for all $A \in \BB_E(\ell_1,\dotsc, \ell_l)$. We can assume that $\mu(E)=1(=\nu(E))$. Let $\PP$ and $\QQ$ be equipartitions for $\mu$ and $\nu$, respectively, and let us call $x$ and $y$ the respective centers of $\PP$ and $\QQ$. We want to prove that $(x_1,\dotsc ,x_l) = (y_1,\dotsc, y_l)$. If $\PP\neq \QQ$, there exists $k$ satisfying \eqref{5hyp1} and \eqref{5hyp2}, we have in particular 
\begin{equation}
 \label{5truc}
(x_1,\dotsc, x_k)=(y_1,\dotsc, y_k)
\end{equation}
and we may assume that $x_{k+1}\geq y_{k+1}$. By the previous lemma, there exists  $(\ee_1,\dotsc,\ee_{k+1})$ such that $\PP(\ee_1,\dotsc,\ee_{k+1})$ is strictly included in $\QQ(\ee_1,\dotsc,\ee_{k+1})$. Since these sets are polytopes, their difference $\QQ(\ee_1,\dotsc,\ee_{k+1})\backslash \PP(\ee_1,\dotsc,\ee_{k+1})$ has non-empty interior. Recall that the set $\QQ(\ee_1,\dotsc,\ee_{k+1})$ depends only on $\ell_1,\dotsc, \ell_{k+1}$. So if $k+1\leq l$, applying \eqref{5equipa}, and the fact that $\mu(U)>0$ for any open set $U$, we get
\[ 
\begin{split}
 2^{-k-1} = \mu \bigl( \PP(\ee_1,\dotsc,\ee_{k+1}) \bigr) & <
  \mu \bigl( \QQ(\ee_1,\dotsc,\ee_{k+1}) \bigr) \\ 
 & = \nu \bigl( \QQ(\ee_1,\dotsc,\ee_{k+1}) \bigr) = 2^{-k-1} , 
\end{split}
\]
which is absurd. Therefore $l \leq k$, and the result follows from \eqref{5truc}. \\
The uniqueness of the center is obtained by letting $\nu=\mu$ and $l=n$.
\end{proof}
\section{Continuity}
We now deal with the continuity of the center, for this we need a topology on $\MM(E)$. Let $(\mu^p)_{p\in\N}$ be a sequence of elements of $\MM(E)$ and $\mu\in \MM(E)$. The sequence $(\mu^p)$ converges \emph{narrowly} to $\mu$ when 
\begin{equation}
 \label{5narrowconv}
\int \phi \, d\mu^p \rightarrow \int \phi \, d\mu 
\end{equation}
for any function $\phi$ continuous and bounded on $E$. A subset $\FF$ of $\MM(E)$ is \emph{tight} if for every $\ee>0$ there exists a compact subset $K$ of $E$ such that $\mu(E\backslash K) < \ee$ for all $\mu \in \FF$. A converging sequence of measures is obviously tight. \\
Recall that when $T\colon\Omega_1\rightarrow \Omega_2$ is a measurable map, and $\mu$ is a measure on $\Omega_1$, the image measure $T_\# \mu$ of $\mu$ by $T$ is defined by
\[ T_\# \mu (A) = \mu \bigl( T^{-1} (A) \bigr) ,\]
for every measurable subset $A$ of $\Omega_2$. \\
Let $F$ be an affine hyperplane of $E$. For any $\mu\in\MM(E)$ and $v\in \vec{E} \backslash \vec{F}$ we let $\mu_{F,v}= (\pi_{F,v})_\# \mu$, hence
\begin{equation}
 \label{5muv}
 \forall A\subset F, \quad \mu_{F,v} (A) = \mu( A + \R_+ v) .
\end{equation}
Let $(\nu^p)$ be a sequence of elements of $\MM(E)$ converging narrowly towards 
$\nu\in\MM(E)$. Notice that if $A\subset E$ is a polytope, then $\nu(\partial A)=0$ and thus $\nu^p (A) \rightarrow \nu (A)$. \\
Let $(v^p)$ be a sequence of elements of $\vec{E}\backslash\vec{F}$ converging to $v \in \vec{E}\backslash\vec{F}$. Then
\[ \nu^p_{F,v^p} \rightarrow  \nu_{F,v}, \quad \text{narrowly}. \]
Indeed, by tightness of the set $\{\nu,\nu^p {\ \vert \ } p\in\N\}$, we can assume that all these measures are supported by a compact set $K$. We can also assume that $F + \R_+ v^p = F+ \R_+ v$ for all $p$. Then, setting $\mu^p = \nu^p_{F,v^p}$ and $\mu=\nu_{F,v}$, it is easy to see that the supports of $\mu$ and $\mu^p$ for all $p$ are contained in a compact subset $L$ of $F$. Let $\phi$ be continuous on $F$, then $\phi$ is uniformly continuous on $L$, from which we get $\phi \circ \pi_{F,v^p} \rightarrow \phi \circ \pi_{F,v}$, uniformly
on $K$. The result follows easily. \\
In the same spirit, if $(u^p)$ is a sequence of elements of $\vec{E}$ converging to $0$ and if $T_p$ is the translation of vector $u^p$, then $(T_p)_\# \nu^p \rightarrow \nu$, narrowly. \\
The following lemma is an easy consequence of Proposition~\ref{5convex}.
\begin{Lem}
 \label{5tightness}
Let $\mathcal{F}\subset \MM(E)$ be tight and such that $\{ \mu(E) {\ \vert \ } \mu \in \mathcal{F} \}$ is bounded. Then there exists a compact subset of $E$ containing any center of any element of $\mathcal{F}$.
\end{Lem}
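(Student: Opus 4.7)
The plan is to deduce the lemma directly from Proposition~\ref{5convex}, which says every Yao-Yao center of $\mu$ lies in any convex $K\subset E$ satisfying $\mu(E\setminus K) < 2^{-n}\mu(E)$. Hence it suffices to exhibit a single compact convex $K\subset E$ making this strict inequality hold simultaneously for every $\mu\in\FF$.

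My construction proceeds in two short steps. First, fix a uniform lower bound $m>0$ on $\mu(E)$ for $\mu\in\FF$ (this positivity is implicit in the hypothesis: otherwise the zero measure would have every point of $E$ as a Yao-Yao center and the conclusion would fail). Tightness applied with $\ee = 2^{-n} m$ then yields a compact $K_0\subset E$ with
\[
\mu(E\setminus K_0) < 2^{-n} m \leq 2^{-n}\mu(E)
\]
for every $\mu\in\FF$. Second, set $K = \conv(K_0)$. In finite dimension $K$ is compact, since by Carath\'eodory's theorem it is the image of the compact set $K_0^{n+1}\times \Delta^n$ (with $\Delta^n$ the standard simplex) under the continuous barycentric map. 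As $K\supset K_0$, the displayed inequality persists with $K$ in place of $K_0$, so Proposition~\ref{5convex} confines every Yao-Yao center of every $\mu\in\FF$ to the fixed compact set $K$.

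I do not expect any real obstacle: the only subtlety is recognizing the tacit positive lower bound on $\mu(E)$, after which tightness provides $K_0$ and the finite-dimensional convex-hull-is-compact fact provides $K$, and Proposition~\ref{5convex} closes the argument in one line.
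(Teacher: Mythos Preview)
Your approach is exactly what the paper intends: it merely states that the lemma ``is an easy consequence of Proposition~\ref{5convex}'' and gives no further proof, so using tightness to find a compact set, passing to its convex hull, and invoking Proposition~\ref{5convex} is the right argument.

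You are also right that an extra hypothesis is needed, but your justification understates the issue. You argue that the zero measure must be excluded; in fact one needs the full uniform lower bound $\inf_{\mu\in\FF}\mu(E)>0$, and the lemma as stated is false without it. For a counterexample in $\R^n$, let $\gamma_p$ be a standard Gaussian probability measure centred at $p\,e^1$ and set $\mu_p=\tfrac{1}{p}\gamma_p$. Each $\mu_p$ is nonzero, $\{\mu_p(E)\}=\{1/p\}$ is bounded, and the family $\{\mu_p\}$ is tight (given $\ee>0$, choose $N>1/\ee$; any compact $K$ handles $p>N$ since $\mu_p(E)<\ee$, and a single large ball handles the finitely many remaining $p$). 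Yet by symmetry the Yao--Yao centre of $\mu_p$ is $p\,e^1$, so the centres escape every compact set. Thus merely excluding the zero measure is not enough; you must assume $\inf_{\mu\in\FF}\mu(E)>0$, which is precisely the ``uniform lower bound $m>0$'' you invoke. In the paper's two applications of the lemma this lower bound does hold (the relevant families consist of measures all of total mass $\tfrac12\mu(E)$, respectively at least $\mu(E)$), so nothing downstream is affected.
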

We now reformulate the definition of a Yao-Yao center. Let $\mu\in\MM(E)$. Then an element $x\in E$ is a center for $\mu$ according to $(\ell_1,\dotsc,\ell_n)$ if and only if letting $F=\{ \ell_1 = x_1\}$ there exists $v\in\vec{E}\backslash\vec{F}$ such that
\begin{itemize}
\item[-] $ \mu (F+ \R_+ v) =  \tfrac{1}{2} \mu(E)$. 
\item[-] the point $x$ is a center for both $\mu_{F,v}$ and $\mu_{F,-v}$, according to the system of coordinates $({\ell_2}_{|F} ,\dotsc, {\ell_n}_{|F})$.
\end{itemize}
In the sequel, such a vector $v$, is called an axis for $(\mu,F)$, and we say that $v$ is normalized if $v_1=1$. 
Here is a first continuity property of the center.
\begin{Lem}
\label{5bidule}
Let $(\mu^p)$ be a sequence of elements of $\MM(E)$ converging narrowly towards $\mu\in\MM(E)$. If all the measures $\mu^p$ share a common center $x$ with respect to $(\ell_1,\dotsc, \ell_n)$; then $x$ is also a center of $\mu$.
\end{Lem}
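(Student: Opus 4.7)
The plan is induction on $n = \dim E$; the case $n = 0$ is trivial, so assume $n \geq 1$ with the result known in dimension $n-1$. Let $F = \{\ell_1 = x_1\}$ and let $v^p \in \vec E \setminus \vec F$ (normalised by $v^p_1 = 1$) be the axis witnessing that $x$ is a center of $\mu^p$; by definition, $x$ is then also a center of $\mu^p_{F, v^p}$ and of $\mu^p_{F, -v^p}$ on $F$ with respect to the restricted system. The mass-split condition for $\mu$ at $x$ is immediate: since $\mu(F)=0$, narrow convergence gives $\mu(\{\ell_1 \geq x_1\}) = \lim_p \mu^p(\{\ell_1 \geq x_1\}) = \tfrac{1}{2}\mu(E)$.

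If $(v^p)$ admits a subsequence converging to some $v$ with $v_1 = 1$, then $v \in \vec E \setminus \vec F$, and the narrow continuity of $\nu \mapsto \nu_{F,w}$ in both arguments recorded earlier in the excerpt yields $\mu^p_{F,\pm v^p} \to \mu_{F,\pm v}$ narrowly on $F$. The induction hypothesis applied in dimension $n-1$ to these two sequences gives that $x$ is a center of both $\mu_{F,v}$ and $\mu_{F,-v}$; combined with the mass split above, this says exactly that $x$ is a center of $\mu$, and we are done.

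The heart of the matter is therefore the boundedness of $(v^p)$ (we may assume $\mu(E) > 0$, otherwise there is nothing to prove), and this is the step I expect to be the main obstacle. Suppose for contradiction that $v^p_j \to +\infty$ along a subsequence, for some $j \geq 2$; the case of a $-\infty$ limit is symmetric. Since $\mu^p_{F,v^p}$ belongs to $\MM(F)$ (indeed, for any hyperplane $G$ of $F$, $\pi_{F,v^p}^{-1}(G)$ sits inside the hyperplane $G + \R v^p$ of $E$) and admits $x$ as a Yao-Yao center on $F$, Proposition~\ref{5convex} applied on $F$ to the convex set $\{y \in F : y_j < x_j\}$ (which does not contain $x$) gives
\[
 \mu^p(A_{v^p}) \;=\; \mu^p_{F,v^p}\bigl(F \cap \{\ell_j \geq x_j\}\bigr) \;\geq\; 2^{-(n-1)} \cdot \tfrac{1}{2}\mu^p(E) \;=\; 2^{-n}\mu^p(E),
\]
where $A = F \cap \{\ell_j \geq x_j\}$. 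On the other hand, formula~\eqref{5proj} gives $A_{v^p} = \{y \in E : y_1 \geq x_1,\; y_j - x_j \geq (y_1 - x_1)v^p_j\}$, so for any bounded set $K$ on which $|y_j - x_j| \leq M$, the intersection $A_{v^p} \cap K$ lies in the slab $\{x_1 \leq \ell_1 \leq x_1 + M/v^p_j\}$. Tightness of $(\mu^p)$ together with $\mu(F) = 0$ and narrow convergence let us push the $\mu^p$-measure of this slab to $0$; consequently $\mu^p(A_{v^p}) \to 0$, contradicting the lower bound above. Thus $(v^p)$ is bounded, any cluster point $v$ satisfies $v_1 = 1$, and the argument of the second paragraph concludes the proof.
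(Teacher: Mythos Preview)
Your proof is correct and follows essentially the same route as the paper's: induction on the dimension, reduction to boundedness of the normalised axis sequence $(v^p)$, and then a contradiction via Proposition~\ref{5convex} once one shows that $\mu^p_{F,v^p}$ assigns vanishing mass to a half-space of $F$ containing $x$. The only cosmetic differences are that the paper phrases the boundedness step through Lemma~\ref{5Av}\,(ii)--(iii) with a general affine form $\ell$ (and concludes $\ell(x)\leq -m$ for all $m$), whereas you specialise to a coordinate $\ell_j$ and extract the contradiction directly from the lower bound $\mu^p(A_{v^p})\geq 2^{-n}\mu^p(E)$ versus the explicit slab estimate; both arguments are equivalent.
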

\begin{proof}
The proof is by induction on the dimension $n$ of $E$. When $n=1$, the result is obvious: if $x$ is a median for all the measures $\mu^p$ then it is a median for $\mu$. We assume that $n\geq 2$ and that the result holds for any affine space of dimension $n-1$. We can also assume that $\mu$ and the measures $\mu^p$ are probability measures. Let $F=\{\ell_1 = x_1 \}$. For all $p$ there exists a normalized axis $v^p$ for $(\mu^p,F)$. We claim that the sequence $(v^p)$ is bounded. Indeed otherwise there exists $\ell$ affine on $E$ such that $\vec{\ell}(v^p) \rightarrow +\infty$. Let $H = \{ \ell \geq 0\}\cap F$. Let $\epsilon >0$, by Lemma~\ref{5Av}~\textit{(iii)}, there exists $w$ such that $w_1=1$ and $\mu(H_w)  < \epsilon$. For $p$ big enough $\vec{\ell}(v^p) > \vec{\ell}(w)$, applying Lemma~\ref{5Av}~\textit{(ii)} we get $H_{v^p} \subset H_w$. Also for $p$ large, we have $\mu^p(H_w) \leq \mu(H_w)+ \epsilon$. Thus
\[ \mu^p_{F,v^p} (H) = \mu^p ( H_{v^p} ) < 2 \epsilon  . \]
Taking $\epsilon$ small enough, it follows from Proposition~\ref{5convex} that for any such $p$ the center $x$ of $\mu^p_{F,v^p}$ does not belong to $H$. Hence $\ell(x) \leq 0$. Let $m>0$, the same holds if we replace $\ell$ by $\ell+m$, so we get $\ell(x)\leq -m$ for all $m>0$, which is absurd. \\
Up to an extraction we can assume that $(v^p)$ has a limit, say $v$ (which satisfies $v_1=1$). Then 
\[\mu^p_{F,v^p} \rightarrow \mu_{F,v} \quad \text{narrowly} . \]
By the induction assumption, $x$ is a center for $\mu_{F,v}$ with respect to the basis $( {\ell_2}_{|F} ,\dotsc, {\ell_n}_{|F} )$, and the same holds for $\mu_{F,-v}$. Therefore $x$ is a center for $\mu$.
\end{proof}
\begin{Cor}
\label{5continuity}
Let $(\mu^p)$ be a sequence of elements of $\MM(E)$ converging narrowly towards $\mu\in\MM(E)$. If every measure $\mu^p$ has a center $x_p$ with respect to $(\ell_1,\dotsc, \ell_n)$ and if the sequence $(x^p)$ has a limit $x$, then $x$ is a center of $\mu$ with respect to $(\ell_1,\dotsc, \ell_n)$.
\end{Cor}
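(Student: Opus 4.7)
The plan is to reduce this corollary to Lemma~\ref{5bidule} by translating each $\mu^p$ so that its center $x^p$ is shifted to the common point $x$. Concretely, set $u^p = x - x^p$ and let $T_p$ denote the translation by $u^p$; note that $u^p \to 0$.

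The first task is to verify that $x$ is a center of $(T_p)_\# \mu^p$ with respect to $\LL$. If $\PP$ is a Yao-Yao equipartition for $\mu^p$ adapted to $\LL$ with center $x^p$, then I claim that $\{T_p(A) \mid A \in \PP\}$ is a Yao-Yao equipartition for $(T_p)_\# \mu^p$ with center $x$, still adapted to $\LL$. Adaptedness is checked by induction on the dimension: the top hyperplane $F^p = \{\ell_1 = x^p_1\}$ is mapped to $\{\ell_1 = x_1\}$ because the first coordinate of $u^p$ equals $x_1 - x^p_1$; the axis lives in $\vec{E}$ and is therefore unchanged by translation; and the translated sub-partitions remain adapted inside the translated hyperplane, the sub-center having been shifted from $x^p$ to $x$ in the same way. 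The equipartition property transfers from $(\mu^p,\PP)$ to $((T_p)_\# \mu^p, T_p(\PP))$ because $(T_p)_\# \mu^p(T_p A) = \mu^p(A)$ and $(T_p)_\# \mu^p(E) = \mu^p(E)$.

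The second task is to observe that $(T_p)_\# \mu^p \to \mu$ narrowly. This is precisely the remark made in the paragraph preceding Lemma~\ref{5tightness}: since $u^p \to 0$ and $\mu^p \to \mu$ narrowly, the translated measures also converge narrowly to $\mu$.

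Now apply Lemma~\ref{5bidule} to the sequence $((T_p)_\# \mu^p)$, whose members share the common Yao-Yao center $x$; this yields that $x$ is a center of $\mu$ with respect to $\LL$. No step looks delicate: the whole argument simply packages the translation-equivariance of the Yao-Yao construction together with the continuity result already established in Lemma~\ref{5bidule}.
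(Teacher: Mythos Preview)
Your proof is correct and follows exactly the same route as the paper: translate each $\mu^p$ by $u^p = x - x^p$ so that the centers all become $x$, invoke the narrow convergence of the translated sequence, and apply Lemma~\ref{5bidule}. The only difference is that you spell out why $x$ is a center of $(T_p)_\#\mu^p$ adapted to $\LL$, whereas the paper simply calls this ``clear.''
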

\begin{proof}
Let $v^p = x-x^p$ and $T_p$ be the translation of vector $v^p$. Since $v^p\rightarrow 0$ we have $(T_p)_\# \mu^p \rightarrow \mu$, and clearly $x = T_p (x^p)$ is a center for $(T_p)_\# \mu^p$. Then the result follows from the previous lemma.
\end{proof}
\section{Proof of Theorem~\ref{5main}}
Let us start with an easy fact. 
Let $V$ be a vector space of finite dimension $n$, with a given basis $(e_1,\dotsc,e_n)$. Let $T:V \rightarrow V$ be continuous and satisfy the following properties:
\begin{enumerate}
\item[(a)] For $k=1\dotsc n$ and $v\in V$, the $k$ first coordinates of $Tv$ depend only on the $k$ first coordinates of $v$.
\item[(b)] If $f$ is a linear form and $(v^p)$ a sequence satisfying $f(v^p)\rightarrow +\infty$ then $f(Tv^p)\rightarrow +\infty$.
\end{enumerate}
Then $T$ is onto.

Indeed, let $u$ in $V$ and $u_1,\dotsc, u_n$ be its coordinates. By (b) and the continuity of $T$, we can find $v_1\in \R$ such that the first coordinate of $T(v_1 e_1)$ is $u_1$. Then we find $v_2 \in \R$ such that the second coordinate of $T( v_1 e_1 + v_2 e_2 )$ is $u_2$, and by (a) the first coordinate of $T ( v_1 e_1 + v_2 e_2 )$ is still $u_1$. And so on. 
\begin{Prop}
Let $E$ be an affine space and $(\ell_1,\dotsc,\ell_n)$ be a system of coordinates.
Any element of $\MM^*(E)$ admits a unique center with respect to $(\ell_1,\dotsc, \ell_n)$.
\end{Prop}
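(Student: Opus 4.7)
Uniqueness is the second statement of Proposition~\ref{5unique}, so only existence needs to be shown. The plan is induction on $n = \dim E$. When $n = 1$ the center is just the median of $\ell_1$ under $\mu$, which exists and is unique because $\mu \in \MM^*(\R)$ has no atoms and gives positive mass to every non-degenerate interval.

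For $n \geq 2$, let $x_1^*$ be the unique median of $\ell_1$ under $\mu$ and set $F = \{\ell_1 = x_1^*\}$. Following the reformulation at the end of Section~5, it suffices to find $v$ with $v_1 = 1$ such that $\mu_{F,v}$ and $\mu_{F,-v}$ share a center on $F$. I would parameterize such $v$ by $u \in \R^{n-1}$ through $v(u) = e^1 + \sum_{i=1}^{n-1} u_i e^{i+1}$. One checks that $\mu_{F, \pm v(u)} \in \MM^*(F)$ (total mass $\tfrac12 \mu(E)$; no mass on affine hyperplanes of $F$, since those lift to hyperplanes of $E$; positive on open subsets of $F$, since those lift to non-empty open subsets of the appropriate half-space). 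The induction hypothesis then provides unique centers $c^\pm(u) \in F$, and I set $T(u)_k = \ell_{k+1}(c^-(u)) - \ell_{k+1}(c^+(u))$ for $k = 1, \dotsc, n-1$. The goal is to verify the hypotheses of the surjectivity fact stated above for $T \colon \R^{n-1} \to \R^{n-1}$, which will supply $u$ with $T(u) = 0$ and hence the desired common center.

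Continuity of $T$ follows from Corollary~\ref{5continuity}, together with the narrow continuity of $u \mapsto \mu_{F, \pm v(u)}$ proved in Section~5 and Lemma~\ref{5tightness}, which prevents the centers from escaping along convergent subsequences. Property~(a) follows from the second part of Proposition~\ref{5unique}: the identity $\ell_j(\pi_{F, \pm v(u)}(z)) = \ell_j(z) - (\ell_1(z) - x_1^*) v_j$ shows that the restriction of $\mu_{F, \pm v(u)}$ to $\BB_F(\ell_2|_F, \dotsc, \ell_{k+1}|_F)$ depends only on $(u_1, \dotsc, u_k)$, so the first $k$ coordinates of $c^\pm(u)$, and thus of $T(u)$, do as well.

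The main obstacle is property~(b). Given a linear form $f$ on $\R^{n-1}$ with $f(u^p) \to +\infty$, I set $L = \sum_i f_i \ell_{i+1}$, so that $\vec L(v(u^p)) = f(u^p) \to +\infty$. For every $M \in \R$, an argument in the spirit of Lemma~\ref{5Av}(iii) shows
\[
\mu_{F, v(u^p)}(\{L|_F \geq M\}) \longrightarrow 0 \quad\text{and}\quad \mu_{F, -v(u^p)}(\{L|_F \leq M\}) \longrightarrow 0,
\]
because the pre-images of these sets under $\pi_{F, \pm v(u^p)}$ are decreasing with $\vec{L}(v(u^p))$ and their intersections lie in the $\mu$-null set $F$. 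Since $\mu_{F, \pm v(u^p)}(F) = \tfrac12\mu(E)$ is a fixed positive constant, Proposition~\ref{5convex} applied to the convex half-spaces $\{L|_F < M\}$ and $\{L|_F > M\}$ then forces $L(c^+(u^p)) < M$ and $L(c^-(u^p)) > M$ eventually, for every $M$; taking $M$ arbitrarily large (and arbitrarily negative) gives $f(T(u^p)) = L(c^-(u^p)) - L(c^+(u^p)) \to +\infty$, as required. The surjectivity criterion then yields $u$ with $T(u) = 0$, and the point $x \in E$ with $\ell_1(x) = x_1^*$ and $\ell_j(x) = \ell_j(c^\pm(u))$ for $j \geq 2$ is the sought-after center.
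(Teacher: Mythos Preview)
Your proof is correct and follows essentially the same strategy as the paper's: fix the median hyperplane $F$, use the induction hypothesis to get centers $c^\pm$ of the two projected measures, define $T$ as their difference, and verify the continuity and the triangularity/coercivity conditions (a) and (b) for the elementary surjectivity criterion. The only differences are cosmetic---you parameterize normalized axes by $u\in\R^{n-1}$ rather than by $v\in\vec F$, and you spell out explicitly why $\mu_{F,\pm v(u)}\in\MM^*(F)$, a point the paper leaves implicit.
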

\begin{proof} 
We have already proved the uniqueness of the center, we shall prove its existence by induction on the dimension. If $E$ has dimension $1$ then we just have to show that any $\mu\in \MM^*(E)$ has a median, which is clear. \\
If $n\geq 2$, we assume that the proposition holds for any affine space of dimension $n-1$. Let $\alpha\in\R$ satisfy $\mu\{ \ell_1 \geq \alpha \} = \tfrac{1}{2} \mu(E)$ and let $F=\{\ell_1=\alpha\}$. Let $u\in \vec{E}$ satisfy $u_1=1$. By the induction assumption, for all $v\in \vec{F}$ and $\ee\in\{-1,1\}$, the measure  $\mu_{F,\ee(u+v)}$ admits a unique center with respect to $(\ell_2,\dotsc,\ell_n)$ which we call $x^{(\ee)}(v)$.  If we can prove that there exists $v$ such that $x^{(1)}(v)=x^{(-1)}(v)$, then we are done. We define
\[ T : v\in \vec{F} \mapsto x^{(-1)}(v) - x^{(1)}(v) \in \vec{F} . \]
If a sequence $(v^p)$ goes to $v$, then $\mu_{F,u+v^p}$ converges narrowly to $\mu_{F,u+v}$. Then by Lemma~\ref{5tightness}, the sequence $\bigl( x^{(1)}(v^p) \bigr)_p $ is bounded, and by Corollary~\ref{5continuity}, any of its converging subsequences goes to the unique center of $\mu_{F,u+v}$. Therefore $ x^{(1)}(v^p)\rightarrow x^{(1)}(v)$, and similarly for $x^{(-1)}$, hence the continuity of $T$. \\
If $(v_2,\dotsc, v_k) = (w_2,\dotsc, w_k)$ then, by Lemma~\ref{5Av} \textit{(ii)}, we have $A_{u+v} = A_{u+w}$ for all $A \in \BB_F( \ell_2,\dotsc,\ell_k)$, hence $\mu_{F,u+v}(A) = \mu_{F,u+w}(A)$. By Proposition~\ref{5unique}, this implies that for $i=2,\dotsc, k$
\[ \ell_i \bigl( x^{(1)} ( v ) \bigr) \ =  \ell_i \bigl(  x^{(1)} ( w ) \bigr) , \]
and similarly for $x^{(-1)}$. Therefore $T$ satisfies (a). \\
Let $\ell$ be an affine form on $E$, and $(v^p)$ be a sequence of elements of $\vec{F}$ such that $\vec{\ell}(v^p)\rightarrow + \infty$. Then it follows from Lemma~\ref{5Av} \textit{(iii)} that for any $m>0$ and any big enough $p$ we get $\mu_{u+v^p} ( F \cap \{ \ell \geq -m\} ) < 2^{-n} \mu(F)$, which by Proposition~\ref{5convex} implies that $\ell( x^{(1)} ( v^p ) ) \leq -m$. Hence 
\[ \ell\bigl( x^{(1)} (v^p) \bigr) \rightarrow - \infty . \]
Similarly $\ell\bigl( x^{(-1)} (v^p) \bigr) \rightarrow + \infty$. Thus $T$ satisfies (b). Therefore $T$ is onto. There exists $v\in\vec{F}$ such that $Tv=0$, then $x^{(1)} (v)=x^{(-1)} (v)$ which concludes the proof.
\end{proof}
\begin{Lem}
If $\mu\in\MM^*(E)$ has a center of symmetry $z$ then $z$ is the unique Yao-Yao center of $\mu$, whatever the basis $\LL$.
\end{Lem}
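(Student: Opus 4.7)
By the existence part of the preceding proposition together with the uniqueness statement of Proposition~\ref{5unique}, the measure $\mu$ admits a unique Yao-Yao center $x$ with respect to $\LL$. The plan is to exhibit a \emph{second} Yao-Yao equipartition of $\mu$ (still adapted to $\LL$) whose center is $\sigma_z(x) := 2z-x$; uniqueness will then force $\sigma_z(x)=x$, i.e.\ $x=z$.

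Let $\PP$ be the equipartition of $\mu$ with center $x$ adapted to $\LL$, and consider its image $\sigma_z(\PP) := \{ \sigma_z(A) : A \in \PP\}$. The center of symmetry hypothesis means exactly that $(\sigma_z)_\# \mu = \mu$, so each element of $\sigma_z(\PP)$ has $\mu$-measure $2^{-n}\mu(E)$. It remains to check that $\sigma_z(\PP)$ is a Yao-Yao partition of $E$ adapted to $\LL$. Its center, read off from the construction, will be $\sigma_z(x)$.

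To handle this last point cleanly I would prove by induction on $\dim E$ the slightly more general claim: if $T\colon E \to E'$ is an affine bijection between two affine spaces equipped with systems of coordinates $(\ell_i)$ and $(\ell_i')$ such that, for each $i$,
\[ \ell_i' \circ T = \epsilon_i \ell_i + c_i \]
for some sign $\epsilon_i \in \{-1,+1\}$ and constant $c_i \in \R$, then $T$ sends Yao-Yao partitions of $E$ adapted to $(\ell_i)$ to Yao-Yao partitions of $E'$ adapted to $(\ell_i')$, mapping centers to centers. The $0$-dimensional case is trivial. In the inductive step, the top splitting hyperplane $F=\{\ell_1=x_1\}$ of $\PP$ is mapped to $T(F) = \{\ell_1' = \epsilon_1 x_1 + c_1\}$, which has the correct form with respect to the new system; the axis $v$ is sent to $\vec{T}(v)\in\vec{E'}\setminus \vec{T(F)}$; and the restriction $T|_F\colon F \to T(F)$ again satisfies the hypothesis for the induced systems $(\ell_i|_F)_{i\geq 2}$ and $(\ell_i'|_{T(F)})_{i\geq 2}$, so by induction $T(\PP_1)$ and $T(\PP_{-1})$ are adapted. (The sign $\epsilon_1$ may permute the roles of $\PP_1$ and $\PP_{-1}$, but this is harmless.)

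Applying this claim to $T=\sigma_z$, with $\epsilon_i = -1$ and $c_i = 2 z_i$ for every $i$, shows that $\sigma_z(\PP)$ is a Yao-Yao equipartition of $\mu$ adapted to $\LL$ with center $\sigma_z(x)$. Proposition~\ref{5unique} then gives $\sigma_z(x)=x$, i.e.\ $x=z$. The uniqueness of $z$ as a Yao-Yao center is itself already contained in Proposition~\ref{5unique}. The only non-formal step is the bookkeeping induction above, and it is straightforward because the notion of being adapted to a system of coordinates depends only on the level sets of the $\ell_i$, which are preserved by any reparametrization of the form $\ell \mapsto \pm \ell + c$.
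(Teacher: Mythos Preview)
Your proof is correct and follows exactly the paper's approach: let $x$ be the unique center, observe that the central symmetry $\sigma_z$ sends the equipartition to another equipartition adapted to $\LL$ with center $\sigma_z(x)$, and invoke uniqueness to get $\sigma_z(x)=x$. The paper dispatches the verification that $\sigma_z(\PP)$ is again adapted to $\LL$ with the single word ``clearly''; your inductive bookkeeping lemma (affine bijections acting coordinatewise by $\ell_i \mapsto \pm \ell_i + c_i$ preserve adapted Yao-Yao partitions and their centers) is precisely the natural way to justify that step, and is the only place where you do more work than the paper.
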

\begin{proof}
Let $x$ be the Yao-Yao center of $\mu$ and $s: E\rightarrow E$ be the symmetry of center $z$. Then clearly $s (x)$ is a center for $s_{\#} \mu$, with respect to $\LL$. Since $s_{\#} \mu = \mu$ and by uniqueness of the center, we obtain $s(x)=x$. Therefore $x=z$. 
\end{proof} 
We are now in a position to prove Theorem~\ref{5main}. Let $E$ be an affine space of dimension $n$ and $(\ell_1,\dotsc,\ell_n)$ be a system of coordinates. Let $\gamma$ be an arbitrary element of $\MM^*(E)$. Let $\mu\in\MM(E)$ and for $p\geq 1$ let $\mu^p = \mu+ \frac{1}{p} \gamma$. Then obviously $\mu^p\in\MM^*(E)$ and $\mu^p\rightarrow \mu$, narrowly. Let $x^p$ be the center of $\mu^p$ with respect to $(\ell_1,\dotsc,\ell_n)$. By Lemma~\ref{5tightness} and Corollary~\ref{5continuity}, the sequence $(x^p)$ is bounded and the limit of any of its converging subsequence is a center for $\mu$, so $\mu$ has a center. \\
If $\mu$ is symmetric with respect to $x$, then we let $\gamma$ be an element of $\MM^*(E)$ symmetric with respect to $x$. Then so is $\mu^p$, by the preceding lemma we get $x^p=x$ for any $p$, therefore $x$ is a center for $\mu$.

\end{document}